\newtheorem{thm}{Theorem}
\newtheorem{cor}[thm]{Corollary}
\newtheorem{lemma}[thm]{Lemma}
\newcommand{\SL}{\operatorname{SL}}
\newcommand{\sign}{\operatorname{sign}}
\numberwithin{equation}{section}
\numberwithin{thm}{section}
\numberwithin{table}{section}
\def\squareforqed{\hbox{\rlap{$\sqcap$}$\sqcup$}}
\def\qed{\ifmmode\squareforqed\else{\unskip\nobreak\hfil
\penalty50\hskip1em\null\nobreak\hfil\squareforqed
\parfillskip=0pt\finalhyphendemerits=0\endgraf}\fi}
\def\newX{Z}
\def \R {{\mathbb R}}
\def \Z {{\mathbb Z}}
\def \sfM{\mathsf M}
\def \sfE{\mathsf E}
 \def\\{\cr}
\def\({\left(}
\def\){\right)}
\def\fl#1{\left\lfloor#1\right\rfloor}
\def\mand{\qquad\mbox{and}\qquad}
\newcommand{\bZ}{\mathbb{Z}}
\def\squareforqed{\hbox{\rlap{$\sqcap$}$\sqcup$}}
\def\qed{\ifmmode\squareforqed\else{\unskip\nobreak\hfil
\penalty50\hskip1em\null\nobreak\hfil\squareforqed
\parfillskip=0pt\finalhyphendemerits=0\endgraf}\fi}
\def \R {{\mathbb R}}
\def \Z {{\mathbb Z}}
 \def\\{\cr}
\def\({\left(}
\def\){\right)}
\def\fl#1{\left\lfloor#1\right\rfloor}
\def\mand{\qquad\mbox{and}\qquad}
 \numberwithin{dummy}{section}
\begin{document}

\title[Counting elements of the congruence subgroup]{Counting elements of the congruence subgroup}

\author[K. Bulinski] {Kamil Bulinski}
\address{School of Mathematics and Statistics, University of New South Wales, Sydney NSW 2052, Australia}
\email{k.bulinski@unsw.edu.au}

%%\author[A. Ostafe] {Alina Ostafe}
%%\address{School of Mathematics and Statistics, University of New South Wales,
%%Sydney NSW 2052, Australia}
%%\email{alina.ostafe@unsw.edu.au}

\author[I. E. Shparlinski] {Igor E. Shparlinski}
\address{School of Mathematics and Statistics, University of New South Wales, Sydney NSW 2052, Australia}
\email{igor.shparlinski@unsw.edu.au}

\begin{abstract}  We obtain asymptotic formulas for the number of matrices in the 
 congruence subgroup 
\[
\Gamma_0(Q) = \left\{ A\in\SL_2(\Z):~c \equiv 0 \pmod Q\right\}, 
\]
which are of naive height at most $X$. Our result is uniform in a
very broad range of values $Q$ and $X$.
 \end{abstract}

\subjclass[2020]{11C20, 15B36, 15B52}

\keywords{$\SL_2(\mathbb{Z})$ matrices}

\maketitle

\tableofcontents

\section{Introduction and the main result}
Given an integer $Q\ge 1$ we consider the congruence subgroup 
\[
\Gamma_0(Q) = \left\{  
A\in\SL_2(\Z):~c \equiv 0 \pmod Q\right\}, 
\]
where 
\[
A =  \begin{bmatrix}
    a   & b \\
    c  & d \\
\end{bmatrix}. 
\]
We are interested in counting matrices $A\in  \Gamma_0(Q) $ with entries 
of  size  at most 
\begin{equation}
\label{eq: Size A L_inf}
\|A\|_\infty = \max\{|a|, |b|, |c|, |d|\} \le X.
\end{equation}
The question is a natural generalisation of the  a classical counting result of Newman~\cite{New} concerning matrices $A \in \SL_2(Z)$ with 
\begin{equation}
\label{eq: Size A L_2}
\|A\|_2 = a^2 + b^2 + c^2 + d^2\le X.
\end{equation}
We note that while we can also use the $L^2$-norm as~\eqref{eq: Size A L_2} to measure the 
``size'' of $A\in \SL_2(\Z)$, for us it is more convenient to use the $L^\infty$-norm as~\eqref{eq: Size A L_inf}. 

Let 
\[
\Gamma_0(Q,X)= \{A \in \Gamma_0(Q):~\|A\|_\infty  \le X\}.
\]
The question of investigating the cardinality $\# \Gamma_0(Q, X)$ has been 
risen in~\cite{BOS},  where it is also shown  that for $Q\le X$  we have 
\[
\# \Gamma_0(Q, X) = X^{2+o(1)} Q^{-1}.
\]

We are interested in obtaining an asymptotic formula for the cardinality
$\# \Gamma_0(Q,X)$ in a broad range of $Q$ and $X$.

Here we use some results 
 of Ustinov~\cite{Ust1} to obtain the following 
 asymptotic formula, where as usual $\varphi(k)$ denotes the Euler function.
 
 We first give an asymptotic formula for $\# \Gamma_0(Q, X) $
 with the main term expressed via sums of some standard arithmetic functions. 
 For this we also define
\[
 F(X,Q)   = 8 \( F_1(Q,X) + F_2(Q,X)\), 
\]
 where
 \begin{align*}
F_1(Q,X) & =  \sum_{1 \le c \le X/Q} \frac{\varphi(cQ)}{cQ},  \\
F_2(Q,X) & = Q^{-1}    \sum_{\substack{Q< x \le X\\ \gcd(x,  Q) = 1}}  \frac{\varphi(x)}{x} . 
 \end{align*}

\begin{thm}
\label{thm:Card G}
Uniformly over an integer $Q\ge 1$ and  a positive real $X\ge Q$, we have 
\[
\# \Gamma_0(Q, X) =  X F(X,Q)  +  O\(X^{5/3+o(1)} Q^{-1} + X^{1+o(1)}\).
\] 
\end{thm}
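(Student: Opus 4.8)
The plan is to count by the lower-left entry. For a matrix with $c\neq 0$ the entry $b$ is recovered as $b=(ad-1)/c$, so the matrix is determined by $(a,c,d)$, and $b$ is an integer with $|b|\le X$ precisely when $ad\equiv 1\pmod{|c|}$ and $|ad-1|\le |c|X$. Hence
\[
\#\Gamma_0(Q,X)= 2(2\lfloor X\rfloor+1) + \sum_{\substack{0<|c|\le X\\ Q\mid c}} \#\bigl\{(a,d): |a|,|d|\le X,\ ad\equiv 1\pmod{|c|},\ |ad-1|\le |c|X\bigr\},
\]
the first term being the contribution of $c=0$ (where $ad=1$, so $(a,d)=\pm(1,1)$ and $b$ is free), which is $O(X)$. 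The inner count depends only on $|c|$; call it $V(c)$, so that, using $c\mapsto -c$, the remaining sum equals $2\sum_{0<c\le X,\,Q\mid c}V(c)$. Grouping the pairs $(a,d)$ by their signs and using the involution $(a,d)\mapsto(-a,-d)$, the sector $ad>0$ and the sector $ad<0$ each occur with multiplicity two; it therefore suffices to evaluate, for a fixed residue $\pm 1$, the number of $(a,d)$ with $1\le a,d\le X$, $ad\le cX$ and $ad\equiv\pm1\pmod c$. The symmetries $c\mapsto -c$ and $(a,d)\mapsto(-a,-d)$, together with the equal contributions of the two sign-sectors, account for the constant $8$ in $F(X,Q)$.

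The heart of the argument is an asymptotic formula for this count of lattice points under the hyperbola $ad\le cX$ subject to the inverse congruence modulo $c$, which is exactly the type of estimate supplied by Ustinov~\cite{Ust1}. Write $P_{\pm}(c)$ for the number of $(a,d)$ with $1\le a,d\le X$, $ad\le cX$ and $ad\equiv \pm1\pmod c$. Splitting the range of $a$ at $a=c$ is convenient: for $a\le c$ the hyperbola is inactive and $d$ runs over the full interval $[1,X]$ in a single residue class modulo $c$, contributing a main term $X\varphi(c)/c$; for $a>c$ the constraint $d\le cX/a$ is active and contributes the tail $X\sum_{c<a\le X,\ \gcd(a,c)=1}1/a$. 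Thus Ustinov's estimates give
\[
P_{\pm}(c)=\frac{X\varphi(c)}{c}+X\!\!\sum_{\substack{c<a\le X\\ \gcd(a,c)=1}}\frac1a+O\bigl(E(c)\bigr), \qquad V(c)=2P_+(c)+2P_-(c),
\]
where $E(c)$ is an error term, governed ultimately by cancellation in the relevant Kloosterman-type sums, that is uniform over the whole range $c\le X$.

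Summing the main terms over $c=kQ$, $1\le k\le X/Q$, reproduces $XF(X,Q)$. The first piece gives $8X\sum_{1\le k\le X/Q}\varphi(kQ)/(kQ)=8XF_1(Q,X)$ directly. For the tail we interchange the order of summation: for fixed $a$ the admissible moduli are the multiples $c=kQ<a$ with $\gcd(c,a)=1$, which forces $\gcd(a,Q)=1$ and leaves $\#\{1\le k<a/Q:\gcd(k,a)=1\}=\varphi(a)/Q+O(a^{o(1)})$ choices, whence the tail contributes
\[
8X\sum_{\substack{Q< a\le X\\ \gcd(a,Q)=1}}\frac1a\cdot\frac{\varphi(a)}{Q}=8XF_2(Q,X),
\]
matching both the range $Q<x\le X$ and the coprimality condition in the definition of $F_2$. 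This step is a routine manipulation of arithmetic functions (partial summation and interchange of summation), and the losses incurred are absorbed into $O(X^{1+o(1)})$.

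Finally one sums the error terms. There are $\lfloor X/Q\rfloor$ values of $c$, and the contribution of the $E(c)$ over the progression $c\equiv 0\pmod Q$, $0<c\le X$, amounts to $O(X^{5/3+o(1)}Q^{-1})$; this is consistent with a per-modulus error of size $(cX)^{1/3+o(1)}$, since $\sum_{k\le X/Q}(kQX)^{1/3}\asymp X^{5/3}Q^{-1}$. Together with the $c=0$ term and the boundary and reorganisation losses, all remaining errors are $O(X^{1+o(1)})$, which yields the stated formula. The main obstacle is the second step: securing Ustinov's asymptotic for the hyperbola count under an inverse congruence with an error term that is genuinely uniform in both $c$ and $X$ throughout the range $X\ge Q$, and then verifying that this error sums to precisely $O(X^{5/3+o(1)}Q^{-1})$. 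It is this uniformity, rather than the shape of the main term (which is the elementary area–density heuristic), that carries the real weight of the proof.
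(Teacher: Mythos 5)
Your proposal follows essentially the same route as the paper: count by the lower-left entry $c=kQ$, split each per-modulus count at $a\le c$ versus $a>c$, treat the first region by the Kloosterman-sum box count on the modular hyperbola and the second by Ustinov's theorem (after dyadic localisation so that $|f''|\asymp 1/L$ holds), then interchange the order of summation to produce $F_2$ and sum per-modulus errors of size roughly $(cX)^{1/3+o(1)}$ to get $X^{5/3+o(1)}Q^{-1}$. The only divergence is minor: you invoke Ustinov symmetrically for both residues $\pm 1\pmod c$ (legitimate, since the cited result of Ustinov treats $xy\equiv \ell \pmod q$ for general $\ell$), whereas the paper states the lemma only for $uv\equiv 1\pmod q$ and disposes of the sectors with $ad<0$ by an explicit unipotent/reflection bijection in its Section~3.4.
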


Next we study the function $F(X,Q)$. 
We first recall the definition of the Dedekind function
\[
\psi(Q) = Q \prod_{\substack{p\mid Q\\p~\mathrm{prime}}} \(1+\frac{1}{p}\).
\]

\begin{thm}  
\label{thm:Func F}
Uniformly over an integer $Q\ge 1$ and  a positive real $X\ge Q$, we have
\[
F(Q, X) =   \frac{96}{\pi^2} \cdot \frac{X}{ \psi(Q)} + O(Q^{-1} \log X).
\] 
\end{thm}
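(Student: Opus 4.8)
The plan is to evaluate $F_1(Q,X)$ and $F_2(Q,X)$ separately, read off from each a main term proportional to $X/\psi(Q)$, and then add. In both sums the engine is the M\"obius expansion $\varphi(n)/n=\sum_{d\mid n}\mu(d)/d$, followed by an interchange of the order of summation, replacement of the resulting counting function by its main term, and completion of a M\"obius series into an Euler product over primes coprime to $Q$. I expect the entire difficulty to lie in the error term and its uniformity in $Q$, rather than in the main term, whose shape is forced by a short local computation.

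For $F_1$ I would substitute $n=cQ$ and use the factorisation
\[
\frac{\varphi(cQ)}{cQ}=\frac{\varphi(Q)}{Q}\prod_{\substack{p\mid c\\ p\nmid Q}}\(1-\frac1p\),
\]
expanding the product as $\sum_{d\mid c,\ \gcd(d,Q)=1}\mu(d)/d$. Interchanging summation gives $F_1=(\varphi(Q)/Q)\sum_{d\le X/Q,\ \gcd(d,Q)=1}(\mu(d)/d)\lfloor X/(dQ)\rfloor$. Replacing $\lfloor X/(dQ)\rfloor$ by $X/(dQ)$ and completing the series, the main term is $(\varphi(Q)/Q)(X/Q)\sum_{\gcd(d,Q)=1}\mu(d)/d^2$. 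Here $\sum_{\gcd(d,Q)=1}\mu(d)/d^2=(6/\pi^2)\prod_{p\mid Q}(1-p^{-2})^{-1}$, and the clean simplification
\[
\frac{\varphi(Q)}{Q}\prod_{p\mid Q}\(1-p^{-2}\)^{-1}=\prod_{p\mid Q}\frac{1}{1+1/p}=\frac{Q}{\psi(Q)}
\]
collapses everything to $F_1=(6/\pi^2)\,X/\psi(Q)+(\text{error})$.

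For $F_2$ I would run the same expansion on $\sum_{x\le T,\ \gcd(x,Q)=1}\varphi(x)/x$; now the inner count is $\#\{x\le T:\ d\mid x,\ \gcd(x,Q)=1\}$ with $d$ coprime to $Q$, whose main term is $(\varphi(Q)/Q)(T/d)$. Summing produces the identical Euler product and hence the same constant $(6/\pi^2)\,Q/\psi(Q)$; dividing by $Q$ and subtracting the contribution of $x\le Q$ gives $F_2=(6/\pi^2)\,X/\psi(Q)+(\text{error})$. Adding $F_1$ and $F_2$ and multiplying by $8$ then yields the asserted main term $(96/\pi^2)\,X/\psi(Q)$.

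The main obstacle is the error term. Replacing floors by exact fractions in $F_1$ costs $(\varphi(Q)/Q)\sum_{d\le X/Q}1/d$, the coprimality count in $F_2$ introduces an inclusion--exclusion error of size $O(2^{\omega(Q)})$ per divisor, and completing the two Euler products leaves $O(1)$-terms (in particular the $x\le Q$ piece of $F_2$ contributes a genuine secondary term of order $Q/\psi(Q)$). A crude treatment of all of these only yields $O(\log X)$. Securing the sharper uniform bound $O(Q^{-1}\log X)$ is therefore the crux: it requires extracting real cancellation from the sums $\sum_d (\mu(d)/d)\{X/(dQ)\}$ rather than bounding them in absolute value, and tracking the $Q$-dependence of every secondary term; this is where I would expect to spend almost all of the work.
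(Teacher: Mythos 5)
Your route is essentially the paper's: its Lemma~\ref{lem:G-expand} is exactly your M\"obius expansion of $F_1$ (stated for $G(Q,Y)=\sum_{n\le Y}\varphi(Qn)/(Qn)$, with $F_1(Q,X)=G(Q,X/Q)$), followed by the same floor-to-fraction replacement, completion of the series, and Euler product; for $F_2$ the paper unfolds the condition $\gcd(x,Q)=1$ by writing the sum as $\sum_{d\mid Q}\mu(d)G(d,X/d)$ instead of your direct count of multiples of $d$ coprime to $Q$, a cosmetic difference. One point in your favour: your local identity
\[
\frac{\varphi(Q)}{Q}\prod_{p\mid Q}\left(1-\frac{1}{p^{2}}\right)^{-1}
=\prod_{p\mid Q}\frac{1}{1+1/p}=\frac{Q}{\psi(Q)}
\]
is the correct one. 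The paper instead asserts $\prod_{p\mid Q}(1-p^{-2})^{-1}=\frac{Q}{\varphi(Q)}\cdot\frac{1}{\psi(Q)}$ and later $\prod_{p\mid Q}(1+p^{-1})^{-1}=\frac{1}{\psi(Q)}$, i.e.\ it silently uses $\psi(Q)$ for $\prod_{p\mid Q}(1+1/p)=\psi(Q)/Q$; this factor-of-$Q$ slip propagates through \eqref{eq: F1} and \eqref{eq: F2}, whose displayed main terms are $X/(Q\psi(Q))$ rather than $X/\psi(Q)$, and it cancels only in the final statement of the theorem.

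As for the crux you flagged: you are right that this method yields only $O(\log X)$, but do not go hunting for hidden cancellation, because the paper's proof does not attain $O(Q^{-1}\log X)$ either, and that error term is in fact unattainable for large $Q$. The paper's own estimate \eqref{eq: F1} carries the error $O\bigl((\varphi(Q)/Q)\log X\bigr)$, which is of size $\log X$ and is never converted into $Q^{-1}\log X$; likewise the $x\le Q$ piece you isolated in $F_2$ contributes $\frac{6}{\pi^2}\cdot\frac{Q}{\psi(Q)}\asymp 1$, which already exceeds $Q^{-1}\log X$ in general. Moreover no argument can do better: $F(Q,X)$ is a step function of $X$ that jumps by $8\varphi(cQ)/(cQ)\gg 1/\log\log X$ at each $X=cQ$, whereas over an interval of length $1$ the continuous main term $\frac{96}{\pi^2}\cdot\frac{X}{\psi(Q)}$ and the remaining jumps coming from $F_2$ move by only $O(1/Q)$; hence the error is $\gg 1/\log\log X$ for some $X\in[Q,2Q]$, which is incompatible with $O(Q^{-1}\log X)$ once $Q\gg(\log X)\log\log X$. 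The defensible statement, which both your argument and the paper's actually establish, is $F(Q,X)=\frac{96}{\pi^2}\cdot\frac{X}{\psi(Q)}+O(\log X)$; this weaker error is harmless downstream, since in Corollary~\ref{cor:AsymFormula} the resulting contribution $X\log X$ is absorbed by $X^{1+o(1)}$.
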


Combining Theorems~\ref{thm:Card G}  and~\ref{thm:Func F}, we obtain.

\begin{cor}  
\label{cor:AsymFormula}
Uniformly over an integer $Q\ge 1$ and  a positive real $X\ge Q$, 
\[
\# \Gamma_0(Q, X) =   \frac{96}{\pi^2} \cdot \frac{X^2}{ \psi(Q)} +  O\(X^{5/3+o(1)} Q^{-1} + X^{1+o(1)}\).
\] 
\end{cor}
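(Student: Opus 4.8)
The plan is to obtain the corollary by direct substitution, since both ingredients are already in hand. First I would start from the asymptotic formula of Theorem~\ref{thm:Card G},
\[
\# \Gamma_0(Q, X) = X F(X,Q) + O\(X^{5/3+o(1)} Q^{-1} + X^{1+o(1)}\),
\]
and feed in the evaluation of the arithmetic quantity $F(Q,X)$ supplied by Theorem~\ref{thm:Func F}, namely $F(Q,X) = \frac{96}{\pi^2} \cdot \frac{X}{\psi(Q)} + O(Q^{-1}\log X)$. Multiplying the latter through by $X$ produces the main term $\frac{96}{\pi^2} \cdot \frac{X^2}{\psi(Q)}$ together with a fresh error contribution of size $O(X Q^{-1}\log X)$.

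The only point to verify is that this new error term does not degrade the stated bound. Since we are in the range $X \ge Q \ge 1$, we have $X Q^{-1}\log X = X^{1+o(1)} Q^{-1}$, and because $X \ge 1$ forces $X^{1} \le X^{5/3}$, this is absorbed into the already-present term $X^{5/3+o(1)} Q^{-1}$. Collecting the main term and the two surviving error terms then yields precisely the claimed formula. One should also note that the standalone error $X^{1+o(1)}$ from Theorem~\ref{thm:Card G} is carried through unchanged.

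In all honesty there is no genuine obstacle here: the corollary is a formal consequence of the two theorems, and every piece of real analytic work — the lattice-point count underlying Theorem~\ref{thm:Card G} and the evaluation of the arithmetic sums defining $F$ in Theorem~\ref{thm:Func F} — has already been performed upstream. The one place demanding a moment's care is the bookkeeping of the $o(1)$ exponents, so that the logarithmic factor $\log X$ is cleanly swallowed by $X^{o(1)}$ and the exponent comparison $1 \le 5/3$ is invoked only within the valid range $X \ge Q$.
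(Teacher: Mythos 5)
Your proposal is correct and matches the paper's own (implicit) argument: the paper derives Corollary~\ref{cor:AsymFormula} precisely by combining Theorems~\ref{thm:Card G} and~\ref{thm:Func F}, exactly as you do. Your extra bookkeeping — noting that $X Q^{-1}\log X = X^{1+o(1)}Q^{-1} \le X^{5/3+o(1)}Q^{-1}$ so the new error is absorbed — is the only verification needed, and it is carried out correctly.
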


We remark the appearance of the  Dedekind function $\psi(Q)$ in the denominator of the  
asymptotic formula for $\# \Gamma_0(Q, X)$ in  Corollary~\ref{cor:AsymFormula} is not surprising as function itself appears in as  the index of $\Gamma_0(Q)$ in $\SL_2(\Z)$,  that is 
\[
[\SL_2(\Z):  \Gamma_0(Q)] =  \psi(Q), 
\]
see~\cite[Proposition~2.5]{Iwan}. 

Elementary estimates easily show that $\psi(Q) = Q^{1+o(1)}$. Thus Corollary~\ref{cor:AsymFormula} is 
nontrivial in an essentially full range of $Q$ and $X$, namely for $Q \le X^{1-\varepsilon}$ for a fixed
$\varepsilon > 0$.

\section{Preparations}  

\subsection{Notation and some elementary estimates}  
We recall that  the notations $U = O(V)$, $U \ll V$ and $ V\gg U$  
are equivalent to $|U|\leqslant c V$ for some positive constant $c$, 
which throughout this work, are absolute.

 Futhermore we write $U \asymp V$ to express that  $V \ll U \ll V$. 

We also write $f(X) = o(g(X))$ if for all $\varepsilon>0$ there exists $X_{\varepsilon}>0$ so that $|f(X)| \leq \varepsilon |g(X)|$ for all $X > X_{\varepsilon}$.

The letter $p$ always denotes a prime number. 

For an integer $k\ne 0$ we denote by $\mu(k)$,  $\tau(k)$ and $\varphi(k)$, the M{\"o}bius function,   the number of integer positive divisors  and the Euler function of $k$,  respectively, 
for which we   use the well-known bound
\begin{equation}
\label{eq:tau}
\tau(k) = |k|^{o(1)}  \mand  
  \varphi(k) \gg \frac{k}{\log \log (k+3)}, 
\end{equation}
as $|k| \to \infty$, see~\cite[Theorems~317 and~328]{HaWr}.
%% see~\cite[Equation~(1.81)]{IwKow}.

As usual we define
\[
\sign u = \begin{cases} 
-1, & \text{if } u < 0,\\
0, & \text{if } u =0,\\
1, & \text{if } u >0.
\end{cases}
\]

 For positive integers $u$ and $v$,  using the M\"obius function $\mu(e)$ and the inclusion-exclusion principle to detect the co-primality
condition and then interchanging the order of summation, we obtain 
 \begin{equation}
\label{eq:phi uv}
\begin{split}
  \sum_{\substack{1\le  c \le v\\ \gcd(c,u) = 1}} 1
& =\sum_{e\mid u}\mu (e) \fl{\frac{v}{e}} =  v
\sum_{e\mid u}\frac{\mu (e)}{e} +O\( \sum_{e\mid u}|\mu (e)|\)\\
& =  v \frac{\varphi(u)}{u} +O\( \tau(u)\) =     v \frac{\varphi(u)}{u} +O\(u^{o(1)}\), 
\end{split}
\end{equation} 
see~\cite[Equation~(16.1.3)]{HaWr}.

\subsection{Modular hyperbolas} 
\label{sec:ModHyp}
Here we need  some results on the distribution of points 
on the modular hyperbola
\begin{equation}
\label{eq: ModHyp}
uv \equiv 1 \pmod q, 
\end{equation}
where $q \ge 1$ is an arbitrary integer.

We start with a very well-known case counting the number $N(q;U,V)$
of 
solutions in a rectangular domain $(u,v) \in [1, U]\times [1,V]$. 
For example, such a result has been recorded in~\cite[Theorem~13]{Shp}
(we note that the restriction $U,V\le q$ is not really necessary.

\begin{lemma} 
\label{lem:ModHyp-Box}  For any $U,V\ge 1$, we have 
\[
N(q;U,V)= \frac{\varphi(q)}{q^2}  UV + O\(q^{1/2+ o(1)}\).
\]
 \end{lemma}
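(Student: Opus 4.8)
The plan is to detect the congruence \eqref{eq: ModHyp} by finite Fourier analysis over $\Z/q\Z$, thereby reducing the error term to a weighted sum of Kloosterman sums, to which we apply the Weil bound. Since $uv\equiv 1\pmod q$ forces $\gcd(u,q)=\gcd(v,q)=1$ and then pins $v$ to the residue class of the inverse $\overline u$ of $u$, I would first write
\[
N(q;U,V)=\sum_{\substack{u_0\bmod q\\\gcd(u_0,q)=1}} r(U;u_0)\,r(V;\overline{u_0}),
\qquad r(W;w)=\#\{1\le n\le W:\ n\equiv w \bmod q\}.
\]
I would then expand each counting function by the exact orthogonality identity $r(W;w)=\tfrac1q\sum_{a\bmod q}E_a(W)\,\eq(-aw)$, where $E_a(W)=\sum_{1\le n\le W}\eq(an)$, substitute, and interchange the order of summation. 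The sum over $u_0$ collapses to a Kloosterman sum $K(a,b;q)=\sum_{\gcd(x,q)=1}\eq(-ax-b\overline x)$, giving the exact identity
\[
N(q;U,V)=\frac1{q^2}\sum_{a\bmod q}\sum_{b\bmod q}E_a(U)\,E_b(V)\,K(a,b;q).
\]

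The diagonal term $(a,b)=(0,0)$ produces the main term: since $E_0(U)=\fl U$, $E_0(V)=\fl V$ and $K(0,0;q)=\varphi(q)$, it contributes $\tfrac{\varphi(q)}{q^2}\fl U\fl V=\tfrac{\varphi(q)}{q^2}UV+O(q^{o(1)})$ once the floors are removed. For the terms with $a,b\neq0$ I would invoke the Weil bound $|K(a,b;q)|\le \tau(q)\gcd(a,b,q)^{1/2}q^{1/2}=q^{1/2+o(1)}\gcd(a,b,q)^{1/2}$ together with the elementary geometric-sum estimate $|E_a(W)|\ll \min\bigl(W,\ q/\min(a,q-a)\bigr)$, which yields $\sum_{a\neq0}|E_a(W)|\ll q\log q=q^{1+o(1)}$. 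Feeding these in, the total contribution of the terms with $a,b\neq0$ is at most $q^{-2}\cdot q^{1/2+o(1)}\cdot(q^{1+o(1)})^2=q^{1/2+o(1)}$, the factor $\gcd(a,b,q)^{1/2}$ being absorbed into $q^{o(1)}$ after grouping by $\gcd(a,q)$ and using \eqref{eq:tau}.

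It remains to handle the degenerate terms with exactly one of $a,b$ equal to $0$. Here $K(0,b;q)$ is the Ramanujan sum $c_q(b)$, and I would exploit the identity $\tfrac1q\sum_{b\bmod q}c_q(b)E_b(V)=\#\{1\le v\le V:\gcd(v,q)=1\}$, which by \eqref{eq:phi uv} equals $\tfrac{\varphi(q)}{q}V+O(q^{o(1)})$; subtracting the $b=0$ contribution then gives $\sum_{b\neq0}c_q(b)E_b(V)=O(q^{1+o(1)})$, and the prefactor $\tfrac{\fl U}{q^2}$ keeps this term of size $q^{o(1)}$ (with the symmetric bound for $b=0$, $a\neq0$). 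Collecting the four pieces gives the claimed formula. The main obstacle, and the only genuinely nonelementary input, is the Weil bound for Kloosterman sums to a composite modulus $q$ combined with the uniform control of the incomplete sums $E_a(W)$; the bookkeeping of the degenerate terms via Ramanujan sums is routine but must be carried out to avoid a spurious loss. These estimates are cleanest in the range $U,V\le q$ of \cite[Theorem~13]{Shp}, where every error contribution is $O(q^{1/2+o(1)})$; the extension to larger boxes follows by decomposing $[1,U]\times[1,V]$ into complete $q\times q$ blocks, each of which contributes the main term exactly, together with the boundary regions treated by the same argument.
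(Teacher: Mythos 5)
The paper does not actually prove this lemma: it is imported directly from \cite[Theorem~13]{Shp}, together with the parenthetical claim that the restriction $U,V\le q$ of that theorem ``is not really necessary''. Your completion argument (orthogonality over $\Z/q\Z$, the Weil bound for Kloosterman sums to composite moduli, Ramanujan sums for the degenerate frequencies) is in substance the standard proof of the cited theorem, and in the range $U,V\le q$ it is correct and complete: the Kloosterman identity is exact, removing the floors in the diagonal term costs only $O(1)$ there, the factor $\gcd(a,b,q)^{1/2}$ is indeed harmless after grouping by $d=\gcd(a,b,q)$ (one gets $\sum_{d\mid q}d^{1/2}(q/d)^2\log^2 q\ll q^2\log^2 q$), and your identity $\frac1q\sum_{b}c_q(b)E_b(V)=\#\{1\le v\le V:~\gcd(v,q)=1\}$ disposes cleanly of the terms with exactly one vanishing frequency.

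The gap is your final sentence: the tiling extension to $U,V>q$ does not yield the stated error term, and no argument can, because the lemma in this generality is false. In the tiling, the complete $q\times q$ blocks contribute their share of the main term exactly, but a boundary block such as $(iq,(i+1)q]\times(lq,V]$ contributes $\#\{v\in(lq,V]:~\gcd(v,q)=1\}$, whose deviation from $\frac{\varphi(q)}{q}(V-lq)$ is in general of order $1$ (already for $q$ prime), and there are $\asymp (U+V)/q$ boundary blocks; the tiling therefore proves only
\[
N(q;U,V)=\frac{\varphi(q)}{q^2}\,UV+O\left((U+V)\,q^{-1+o(1)}+q^{1/2+o(1)}\right).
\]
The extra term cannot be removed: take $q$ prime, $U=q-1$, $V=q^2$. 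Each $u\in[1,q-1]$ admits exactly $q$ values $v\le q^2$ with $uv\equiv 1\pmod q$, so $N(q;U,V)=q(q-1)$, while the main term is $\frac{q-1}{q^2}(q-1)q^2=(q-1)^2$, leaving an error of $q-1\gg q^{1/2+o(1)}$. So you should claim the result only for $U,V\le q$ (or, say, $U+V\ll q^{3/2}$, where your tiling bound suffices), and flag the failure beyond that rather than assert the extension ``follows''. The same objection applies to the lemma as stated in the paper and to its parenthetical remark; fortunately the paper only ever applies it with $U=cQ=q$ exactly and $V=X$, in which case the $u$'s form a complete residue system, $N(q;q,V)=\#\{v\le V:~\gcd(v,q)=1\}$ exactly, and \eqref{eq:phi uv} gives an error $O(q^{o(1)})$, so the results of the paper are unaffected.
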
 

Next  we recall a result of   Ustinov~\cite{Ust1} on 
 the number $T_f(q; \newX,U)$ of points $(u,v)$ on the modular hyperbola~\eqref{eq: ModHyp} 
with  variables run through a domain 
of the form 
\[
\newX<  u \le \newX+U \mand 0 \le v \le f(u), 
\]
where $f$ is a positive function with a continuous second derivative.

Namely a special case of~\cite{Ust1}, where we have also  used~\eqref{eq:tau} to estimate various divisor sums, 
can be formulated as follows.

Let  
\begin{align*}
\mathcal{T}_f(q,\newX,U) = \{(u,v) \in \bZ^2 :~  \newX < u \le \newX+U, \   0  &< v \le f(u), \\ & uv  \equiv 1 \pmod q \}
\end{align*}
and let 
\[
T_f(q,\newX,U) = \# \mathcal{T}_f(q,\newX,U) .
\]

\begin{lemma} 
\label{lem:ModHyp-Curve} Assume that the function $f:\R\to \R_{\geq 0}$ has a continuous second derivative
on $[\newX,\newX+U]$ such that for some $L >0 $ we have 
\[
 \left|f''(u)\right| \asymp \frac{1}{L}, \qquad u \in [\newX,\newX+U].
\]
Then we have the estimate
\[
T_f(q; \newX,U) = \frac{2}{q} \sum_{\substack{\newX < u \le \newX + U\\ \gcd(u,q) = 1}}  f(u)+ O\(\(U L^{-1/3} + L^{1/2} + q^{1/2}\)(qU)^{o(1)}\).
\]  
 \end{lemma}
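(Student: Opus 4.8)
The plan is to fix $u$ and count the admissible $v$ in an arithmetic progression, then sum over $u$. For an integer $u\in(\newX,\newX+U]$ with $\gcd(u,q)=1$ the congruence $uv\equiv 1\pmod q$ pins $v$ to the single residue class $\overline{u}\pmod q$, where $\overline{u}$ is the inverse of $u$ modulo $q$; if $\gcd(u,q)>1$ there are no admissible $v$, which is exactly why the main sum in the statement is restricted to $\gcd(u,q)=1$. Thus the admissible $v$ form an arithmetic progression of difference $q$ inside $(0,f(u)]$. Counting its elements via the sawtooth function $\varrho(t)=\{t\}-\tfrac12$ (I avoid the symbol $\psi$, reserved here for the Dedekind function) and summing over $u$ produces the main term of the statement, and reduces the whole problem to bounding the sawtooth sum
\[
S=\sum_{\substack{\newX<u\le \newX+U\\ \gcd(u,q)=1}}\varrho\!\left(\frac{f(u)-\overline{u}}{q}\right),
\]
together with a simpler companion $\sum_{\gcd(u,q)=1}\varrho(\overline{u}/q)$ carrying no smooth part, by $\ll\(UL^{-1/3}+L^{1/2}+q^{1/2}\)(qU)^{o(1)}$.

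The next step is to linearise $\varrho$ by the Vaaler approximation (the truncated Fourier expansion of the sawtooth function), which for a parameter $H\ge 1$ gives
\[
S\ll \frac{U}{H}+\sum_{1\le h\le H}\frac1h\left|\sum_{\substack{\newX<u\le\newX+U\\ \gcd(u,q)=1}}\e\!\left(\frac{hf(u)}{q}\right)\eq(-h\overline{u})\right|.
\]
Each inner sum carries two kinds of oscillation: the arithmetic Kloosterman phase $\eq(-h\overline{u})$, and the analytic phase $\e(hf(u)/q)$, which is smooth with second derivative $\asymp h/(qL)$ by the hypothesis $|f''|\asymp 1/L$. The companion sum is the special case with no analytic phase and is controlled directly by the Weil bound, contributing only $q^{1/2+o(1)}$.

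To extract cancellation from the inner sums I would partition $(\newX,\newX+U]$ into $\asymp U/W$ subintervals of length $W\asymp(qL/h)^{1/2}$, chosen so that the quadratic Taylor remainder of $f$ shifts the phase $hf(u)/q$ by $O(1)$ across each subinterval; there $\e(hf(u)/q)$ may be replaced by a linear phase $\e(\alpha u)$ up to a bounded error. The sum over each subinterval then becomes an incomplete Kloosterman sum $\sum_u\e(\alpha u)\eq(-h\overline{u})$, which one completes modulo $q$ by a P\'olya--Vinogradov-type argument (handled despite the real frequency $\alpha$ by expanding the interval indicator into additive characters modulo $q$) and estimates by Weil's bound as $q^{1/2+o(1)}$. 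Multiplying by the $\asymp U/W$ subintervals, inserting the weight $1/h$ and summing over $h\le H$ gives a bound $\asymp H^{1/2}UL^{-1/2}q^{o(1)}$; balancing this against the truncation term $U/H$ forces the optimal choice $H\asymp L^{1/3}$ and yields the principal error $UL^{-1/3}$. The boundary terms from the subinterval dissection and the degenerate ranges where $W\lesssim 1$ supply the $L^{1/2}$, the single-period part of the completion supplies the $q^{1/2}$, and all divisor factors from Weil's bound together with the logarithmic losses from the $h$- and character-sums are absorbed into $(qU)^{o(1)}$ using \eqref{eq:tau}.

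The main obstacle is the simultaneous exploitation of arithmetic and analytic cancellation in $\sum_u\e(hf(u)/q)\eq(-h\overline{u})$: treating the Kloosterman phase and the smooth phase in isolation is wasteful, and the strength of the estimate rests on tuning the subinterval length $W$ so as to balance the curvature scale $L$ against the modulus $q$, making the linearisation cheap while keeping the completed Kloosterman sums short. Securing the clean exponent $L^{-1/3}$, rather than a weaker van der Corput exponent, together with the careful completion step in the presence of a non-integer linear frequency, is where the genuine work of Ustinov's argument lies.
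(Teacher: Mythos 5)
The first thing to know is that the paper contains no proof of this lemma: it is imported verbatim as ``a special case of~\cite{Ust1}'' (Ustinov's theorem), with only the cosmetic step of absorbing divisor factors via~\eqref{eq:tau}. So your proposal is an attempt to reprove Ustinov's theorem from scratch, and must be judged on its own merits. The architecture you chose --- counting $v$ in a single residue class via the sawtooth $\varrho$, truncating by Vaaler's approximation, splitting $(\newX,\newX+U]$ into blocks on which the analytic phase is nearly linear, completing to Kloosterman sums, invoking Weil, and balancing $H\asymp L^{1/3}$ --- is indeed the right family of ideas, and the bookkeeping $UL^{-1/2}H^{1/2}+U/H\to UL^{-1/3}$ is arithmetically coherent.

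Nevertheless there are genuine gaps. (i) The linearisation step fails as written: with $W\asymp(qL/h)^{1/2}$ the Taylor remainder of the phase is $O(1)$, and an order-one phase error cannot be ``replaced up to a bounded error'' --- it costs $O(1)$ per summand, hence $O(W)$ per block and $O(U)$ overall, which is trivial. The standard repair is to subtract the linear approximation and remove the leftover factor, whose total variation on the block is $O(1)$, by partial summation \emph{before} completing; fixable, but not what you wrote. (ii) More seriously, your argument extracts cancellation only from the Weil bound, so it degenerates when $L$ is large compared to $q$: in the extreme case $q=1$ (which genuinely occurs in the paper's application, where $q=cQ$ can equal $1$ while $L\asymp \newX_i^3/(cQX)$ is huge) the lemma is exactly van der Corput's classical lattice-point theorem with error $UL^{-1/3}+L^{1/2}$, yet completion modulo $q=1$ gives nothing and your bound for each harmonic collapses to the trivial $O(U)$. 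In that regime the terms $UL^{-1/3}$ and $L^{1/2}$ require genuine analytic (stationary-phase/van der Corput) cancellation in $\e(hf(u)/q)$ interleaved with the arithmetic structure; this is precisely the content of Ustinov's proof, and your sketch explicitly defers it (``where the genuine work of Ustinov's argument lies'') rather than supplying it. Relatedly, attributing the $L^{1/2}$ term to ``degenerate ranges where $W\lesssim1$'' is vacuous, since for $h\le H\asymp L^{1/3}$ one has $W\asymp(qL/h)^{1/2}\gg q^{1/2}L^{1/3}$. (iii) Finally, a correct execution of your opening step gives the main term $\frac1q\sum_{\gcd(u,q)=1}f(u)$ --- one residue class for $v$ contributes $f(u)/q+O(1)$ --- and cannot produce the factor $\frac2q$ of the statement, yet you assert agreement. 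As it happens, the $2/q$ looks like a misprint in the paper itself: when the lemma is applied in Section~3 the main term used is $\frac{1}{cQ}\sum f_c(x)$, and the constant $96/\pi^2$ of Corollary~\ref{cor:AsymFormula} is consistent only with $1/q$. A careful proof attempt should have flagged this discrepancy rather than claimed to recover the stated constant.
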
 
 
  For other results on the distribution of points on 
modular hyperbolas we refer to the survey~\cite{Shp} and also more recent works~\cite{Baier,BrHay,Chan,GarShp,Hump,Ust2}.

\section{Proof of Theorem~\ref{thm:Card G}}  

\subsection{Separating contributions to the main term and to the error term} 
\label{sec: separating}
It is convenient to assume that $X$ is integer. 

It is easy to see that there are only $O(X)$ matrices  in  $\SL_2(\Z; X)$ 
with $abcd=0$. We now consider the following eight sets for different choices of the signs 
of $a$, $c$ and $d$:
\[
\Gamma_0^{\alpha,  \gamma, \delta}(Q,X)=
\{A\in \Gamma_0(Q,X):~ \sign a =  \alpha, \ \sign c = \gamma,\ \sign d = \delta\}, 
\]
with  $\alpha,  \gamma, \delta \in \{-1,1\}$.

Now observe that $\Gamma_0(Q,X)$ is preserved under the bijections
\[\begin{bmatrix}
    a   & b \\
    c  & d \\
\end{bmatrix} \mapsto \begin{bmatrix}
    -a   & b \\
    c  &- d \\
\end{bmatrix}
\]
and 
\[\begin{bmatrix}
    a   & b \\
    c  & d \\
\end{bmatrix} \mapsto \begin{bmatrix}
    a   & -b \\
    -c  & d \\
\end{bmatrix}. 
\] This means
\[
\# \Gamma_0^{1,1,1}(X,Q) = \# \Gamma_0^{\alpha, \gamma, \alpha}
\] and 
\[\# \Gamma_0^{1,1,-1}(X,Q) = \# \Gamma_0^{\alpha, \gamma, -\alpha}
\] 
for all pairs $\alpha,\gamma \in \{-1,1\}$.

Thus
\begin{equation}
\label{eq: G and Gabc}
\# \Gamma_0(Q,X) =  4 \cdot (\# \Gamma_0^{1,  1, 1 }(Q,X) + \# \Gamma_0^{1,  1, -1 }(Q,X)) + O(X).
\end{equation}

\subsection{Preliminary counting $\Gamma_0^{1,  1, 1 }(Q,X)$} 
%%We now concentrate on evaluating $ \Gamma_0^{1,1,1}(Q,X)$.  That is, w
Writing $cQ$ instead of $c$, we need to count the number of solutions to the equation 
\[
ad = 1 + bcQ, \qquad 1 \le a, |b|,d\le X, \ 1 \le c \le X/Q.
\]
We first do this for a fixed $c$ and then sum up over all $c \le X/Q$.  

First we consider the values $a \le cQ$.
For this, we note that setting 
\[
b = \frac{ad -1}{cQ} \le X
\]
for a solution $(a,d)$ to the congruence 
\[
ad \equiv  1 \pmod {cQ} \qquad 1 \le a \le cQ, \ 1 \le d\le X.
\]
we have $b \le X$. Hence, we see from Lemma~\ref{lem:ModHyp-Box} 
(and then recalling that $cQ  \le X$) that
for every $c \in [1, X/Q]$ there are  
 \begin{equation}
\label{eq: G1c}
\begin{split}
G_{1}(c) & = \frac{\varphi(cQ)}{(cQ)^2} cQ X  + O\(\(cQ\)^{1/2 + o(1)}\)\\
& =  \frac{\varphi(cQ)}{cQ} X  + O\(X^{1/2 + o(1)}\)
\end{split}
\end{equation}  
such matrices 
\[
  \begin{bmatrix}
    a   & b \\
    cQ  & d \\
\end{bmatrix} \in \Gamma_0^{1,1,1,}(Q,X).
\]

Next we count the contribution $G_{2}(c)$ from  matrices $A\in  \Gamma_0^{1,1,1,}(Q,X)$ with $a > cQ$. To do this, we recall the notation of Section~\ref{sec:ModHyp} and then  parametrise this set using a modular hyperbola as follows.

\begin{lemma}\label{Lemma: bijection} Fix $1 \leq c \leq X/Q$, $0 < U \leq X - cQ$ and define 
\[f_c(x) = \frac{cQX+1}{x}.\] 
Then the map \[\mathcal{T}(f_c, cQ, U)  \to \Gamma_0^{1,1,1}(Q,X)\] given by 
\[
(x,y) \mapsto   \begin{bmatrix}
    x  & (xy-1)/cQ \\
    cQ  & y \\
\end{bmatrix}\]
is well defined, injective and its image is exactly the set of those $A\in  \Gamma_0^{1,1,1,}(Q,X)$ with $cQ < a \leq cQ + U$ and bottom left entry equal to $cQ$.
\end{lemma}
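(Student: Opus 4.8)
The plan is to establish all three assertions—well-definedness, injectivity, and the precise identification of the image—by directly unwinding the definition of $\mathcal{T}(f_c,cQ,U)$ (that is, the set $\mathcal{T}_{f_c}(q,\newX,U)$ of Section~\ref{sec:ModHyp} specialised to modulus $q=cQ$ and base point $\newX=cQ$) and matching its three defining conditions term by term against those of $\Gamma_0^{1,1,1}(Q,X)$. The one genuinely substantive observation is that the cutoff curve $v\le f_c(u)=(cQX+1)/u$ is engineered so that the upper bound on $v$ is \emph{equivalent} to the top-right matrix entry being $\le X$; once this is recognised, everything else is bookkeeping with the defining inequalities.

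For well-definedness I would fix $(x,y)\in\mathcal{T}(f_c,cQ,U)$ and form $A$ as in the statement. The congruence $xy\equiv1\pmod{cQ}$ makes $b=(xy-1)/cQ$ an integer, and then $\det A = xy - cQ\cdot b = xy-(xy-1)=1$, so $A\in\SL_2(\Z)$; since the bottom-left entry is $cQ\equiv0\pmod Q$, in fact $A\in\Gamma_0(Q)$. The sign pattern $(\sign a,\sign c,\sign d)=(1,1,1)$ follows from $x>cQ\ge1$, from $cQ\ge1$, and from $y>0$. For the height bound $\|A\|_\infty\le X$ I would check each entry separately: $x\le cQ+U\le X$ by the hypothesis $U\le X-cQ$; $cQ\le X$ since $c\le X/Q$; using that $f_c$ is decreasing together with $x\ge cQ+1$ one gets $y\le f_c(cQ+1)=(cQX+1)/(cQ+1)\le X$ (which reduces to $1\le X$); and the key cancellation $xy\le x\,f_c(x)=cQX+1$ gives $0<b=(xy-1)/cQ\le X$. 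Hence the map lands in the target set, namely those $A\in\Gamma_0^{1,1,1}(Q,X)$ with $cQ<a\le cQ+U$ and bottom-left entry $cQ$.

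Injectivity is immediate, since $x$ and $y$ are simply the top-left and bottom-right entries of the image matrix. For the reverse inclusion I would start from an arbitrary $A=\begin{bmatrix}a&b\\cQ&d\end{bmatrix}$ in the target set and set $(x,y)=(a,d)$. The relation $\det A=1$ reads $ad=1+bcQ$, which yields at once $xy=ad\equiv1\pmod{cQ}$ and, using $b\le|b|\le X$, the inequality $ad\le 1+cQX$, i.e.\ $d\le f_c(a)$; combined with $d>0$ (from $\sign d=1$) and $cQ<a\le cQ+U$, this places $(x,y)$ in $\mathcal{T}(f_c,cQ,U)$, and it visibly maps back to $A$. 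Comparing the two inclusions shows the image equals the target set exactly. I expect no real obstacle here; the only place demanding care is the chain of boundary inequalities—in particular the exact equivalence $xy\le cQX+1\Leftrightarrow b\le X$ that pins down $f_c$ as the correct cutoff—so I would write those inequalities out explicitly rather than leave them implicit.
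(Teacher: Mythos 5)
Your proof is correct and follows essentially the same route as the paper: a direct unwinding of the defining inequalities of $\mathcal{T}(f_c,cQ,U)$ against those of $\Gamma_0^{1,1,1}(Q,X)$, with the determinant identity $xy-cQ\cdot(xy-1)/cQ=1$ and the equivalence $y\le f_c(x)\Leftrightarrow (xy-1)/cQ\le X$ doing the real work in both directions. If anything, your bound $y\le f_c(cQ+1)\le X$ via monotonicity and $x\ge cQ+1$ is slightly cleaner than the paper's estimate $f_c(x)<X+1/(cQ)$, since it does not rely on the standing assumption that $X$ is an integer.
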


\begin{proof} For $(x,y) \in \mathcal{T}(f_c,cQ,U)$ we have that $(xy-1)/cQ \in \bZ$ and 
\[
0 < y \leq f_c(X),
\] which is equivalent to 
\[\frac{-1}{cQ} < (xy - 1)/cQ \leq X.
\] 
As $x > cQ \geq 1$ and $y>0$ this is actually equivalent to 
\[1 \leq (xy - 1)/cQ \leq X.\] 
We also need to check that $1 \leq y \leq X$. This follows since 
\[0<y \leq f_c(x) = \frac{cQX+1}{x} < \frac{cQX+1}{cQ} = X+\frac{1}{cQ} \leq X+1.\] Thus indeed $(x,y)$ is mapped to an element of $\Gamma_0^{1,1,1}(Q,X)$ with the desired properties. Conversely, suppose that $A \in \Gamma_0^{1,1,1}(Q,X)$ with $a>cQ$ and bottom left entry equal to $cQ$. As $ad \equiv 1 \pmod cQ$ we have $1 \leq x,y \leq X$ such that
\[ A = \begin{bmatrix}
    x  & (xy-1)/cQ \\
    cQ  & y \\
\end{bmatrix}.\]

Also by definition (the lower bound holds as $x>cQ\geq 1$) 
\[1 \leq \frac{xy - 1}{cQ} \leq X,\] 
which means 
\[0 < \frac{cQ+1}{x} \leq y \leq \frac{cQX + 1}{x} = f_c(x)\]
and so indeed $(x,y) \in \mathcal{T}(f_c, cQ, U)$. \end{proof}

We partition the interval $(cQ,X]$ into $I \ll \log X$ dyadic intervals of the form $(\newX_i,  \newX_i+U_i]$ with 
\[\newX_i =  2^{i-1} cQ \mand U_i \le \newX_i, \qquad i =1, \ldots, I, 
\]
(in fact $U_i = \newX_i$, except maybe for  $i=I$) 
and note that 
\begin{equation}
\label{eq: cQ2I}
 2^{I} cQ \asymp X.
\end{equation}
 
We now write 
\begin{equation}
\label{eq: G2 and Tf} 
G_{2}(c) = \sum_{i=1}^I T_{f_c}\(cQ; \newX_i, U_i \), 
\end{equation}
where $f_c(x)$ is as in Lemma~\ref{Lemma: bijection}.

Next, for each  $i =1, \ldots, I$, we use Lemma~\ref{lem:ModHyp-Curve} with $q = cQ$  and use that 
\[
 \left|f''(x)\right|\asymp   \frac{cQ X}{\newX_i^3}  \asymp    \frac {X}{ 2^{3i} (cQ)^2}
\]
for $x \in (Z_i,  Z_i+U_i]$.  Therefore, we conclude that 
 \begin{equation}
\label{eq: Ti Mi}
T_{f_c}\(cQ; \newX_i , U_i\)  = M_{i}(c) + O\(E_{i}(c) X^{o(1)}\), 
\end{equation}  
where
\begin{align*}
&M_{i}(c)  =  \frac{1}{cQ}  \sum_{\substack{\newX_i < x \le \newX_i+ U_i\\ \gcd(x, cQ) = 1}}  f_c (x),\\
& E_{i}(c)=2^{i} cQ \(  \frac {X}{ 2^{3i} (cQ)^2} \)^{1/3}
 +  \( \frac{ 2^{3i} (cQ)^2}{X} \)^{1/2} +X^{1/2}. 
\end{align*}
Combing the main terms $M_{i}(c)$, $i =1, \ldots, I$,  together and recalling~\eqref{eq: G2 and Tf}, 
we obtain 
 \begin{equation}
\label{eq: G2c prelim}
G_{2}(c) = \sfM(c) +  O\(\sfE(c)   X^{o(1)}\), 
\end{equation}  
where
\[
 \sfM(c)  =  \frac{1}{ cQ}  \sum_{\substack{cQ< x \le X\\ \gcd(x, |c|Q) = 1}}  f_c (x)
 \]
 and 
\begin{align*}
\sfE(c)  &  = \sum_{i=1}^I 
\(2^{i} cQ \(  \frac {X}{ 2^{3i} (cQ)^2} \)^{1/3}
 +  \( \frac{ 2^{3i} (cQ)^2}{X} \)^{1/2} + \(cQ\)^{1/2}\)\\
&  = \sum_{i=1}^I 
\( \(cQX\)^{1/3} + 2^{3i/2}  cQ X^{-1/2} + \(cQ\)^{1/2}\)\\
&  = \( \(cQX\)^{1/3} + 2^{3I/2}  cQ X^{-1/2} + \(cQ\)^{1/2}\)X^{o(1)}.
\end{align*}
Recalling~\eqref{eq: cQ2I} and using $cQ \le X$ we obtain 
\[
\sfE(c)    \le \(X^{2/3} +   (cQ)^{-1/2}  X\) X^{o(1)}.
\]
which after the substitution in~\eqref{eq: G2c prelim} yields
 \begin{equation}
\label{eq: G2c}
G_{2}(c) = \sfM(c) +  O\( \(X^{2/3} +   (cQ)^{-1/2}  X\)  X^{o(1)}\).
\end{equation}  

%%\subsection{Concluding the proof} 
\subsection{Asymptotic  formula for $\Gamma_0^{1,  1, 1 }(Q,X)$} 
From the equations~\eqref{eq: G1c} and~\eqref{eq: G2c}  we obtain 
 \begin{equation}
\label{eq: G111 M E}
\# \Gamma_0^{1,1,1}(Q,X)  = \sum_{1 \le c \le X/Q} \(G_1(c) + G_2(c)\)
=  \mathbf{M} + O\(\mathbf{E}\), 
\end{equation}
where 
\begin{align*}
 \mathbf{M} & = \sum_{1 \le c \le X/Q}  \( \frac{\varphi(cQ)}{cQ} X  + 
 \frac{1}{ cQ}  \sum_{\substack{cQ< x \le X\\ \gcd(x,  cQ) = 1}}  f_c (x)\)\\
 & = X F_1(Q,X) +  \sum_{1 \le c \le X/Q} \frac{1}{ cQ}  \sum_{\substack{cQ< x \le X\\ \gcd(x,  cQ) = 1}}  f_c (x)
 \end{align*}
and 
\[
 \mathbf{E} = \sum_{1 \le c \le X/Q}    \(X^{2/3} +   (cQ)^{-1/2}  X\)  X^{o(1)}
 =   X^{5/3+o(1)} Q^{-1}   . 
\]

We also note that 
\begin{align*}
 \frac{1}{ cQ}  \sum_{\substack{cQ< x \le X\\ \gcd(x,  cQ) = 1}}  f_c (x) &
 =   \sum_{1 \le c \le X/Q}  \sum_{\substack{cQ< x \le X\\ \gcd(x,  cQ) = 1}}  
 \frac{cQ X + 1}{cQx}  \\
& =  X \sum_{1 \le c \le X/Q}  \sum_{\substack{cQ< x \le X\\ \gcd(x,  cQ) = 1}}  
  \frac{1}{x}  + O\(X^{o(1)}\).
 \end{align*}

Change the order of summation, we write 
\[
 \sum_{1 \le c \le X/Q}  \sum_{\substack{cQ< x \le X\\ \gcd(x,  cQ) = 1}}  
  \frac{1}{x}    =   \sum_{\substack{Q< x \le X\\ \gcd(x,  Q) = 1}}  \frac{1}{x}
 \sum_{\substack{c< x/Q\\ \gcd(x,  c) = 1}} 1.
\]
Hence, recalling~\eqref{eq:phi uv},  we derive that 
\begin{align*}
 \sum_{1 \le c \le X/Q}  
 \frac{1}{ cQ}  \sum_{\substack{cQ< x \le X\\ \gcd(x,  cQ) = 1}}  f_c (x)  
& =  Q^{-1}    \sum_{\substack{Q< x \le X\\ \gcd(x,  Q) = 1}}  \frac{\varphi(x)}{x}    +O\(X^{o(1)}\)\\
&  = F_2(Q,X)   +  O\(X^{o(1)}\). 
 \end{align*}

 Thus, we see from~\eqref{eq: G111 M E} that 
\begin{equation}
\label{eq:  Asymp G111} 
\# \Gamma_0^{1,1,1}(Q,X)  = X\( F_1(Q,X) + F_2(Q,X)\) + O\(X^{5/3+o(1)} Q^{-1}\). 
\end{equation} 
%%which together with~\eqref{eq: G and G111} concludes the proof. 
%\begin{align*}
%F_1(Q,X) & =  \sum_{1 \le c \le X/Q} \frac{\varphi(cQ)}{c},  \\
%F_2(Q,X) & =  \sum_{1 \le  c\le X/Q}  \sum_{\substack{cQ< x \le X\\ \gcd(x,  cQ) = 1}}   \frac{1}{x} . 
% \end{align*}
% 

\subsection{Counting $\Gamma^{-1,1,1}(Q,X) $} Recalling~\eqref{eq: G and Gabc} 
 we see that it remains to count $\Gamma_0^{-1,  1, 1 }(Q,X)$.  One can use a similar 
 argument, but In fact we show that
\begin{equation}
\label{eq: negative values same count} 
\# \Gamma^{-1,1,1}(Q,X) =\# \Gamma^{1,1,1}(Q,X) + O\(\mathbf{E} + X\),
\end{equation} 
where the error term $\mathbf{E} = O(X^{5/3+o(1)} Q^{-1}  )$ is the same as obtained above.

Thus we wish to count matrices of the form 
\[A = \begin{bmatrix}
    x  & (xy-1)/cQ \\
    cQ  & y \\
\end{bmatrix}, 
\]
where $xy \equiv 1 \pmod {cQ}$, $-X\leq x \leq -1$, $1 \leq y \leq X$, $1 \leq cQ \leq X$ and $-X \leq (xy-1)/cQ \leq -1$. 

Without loss of generality we can assume that $X \not \in \Z$. 
Then we consider the following two cases.

\textbf{Case~I: $x> -cQ$.}   Note that for any $x,y$ with $xy \equiv 1 \pmod{cQ}$, $-cQ < x \leq -1$ and $1 \leq y \leq X$ we have 
\[ 
\frac{-cQX - 1}{cQ} < \frac{xy-1}{cQ} \leq \frac{-2}{cQ},
\] 
and so 
\[-X \leq \frac{xy-1}{cQ} \leq -1.
\] Thus indeed the corresponding $A$ is in $\Gamma_0^{-1,1,1}(Q,X)$. Note that since $0<x + cQ \leq cQ$ and $-X \leq (xy-1)/cQ + y \leq X $ we have that 
\[\begin{bmatrix}
    1 & 1\\
    0  & 1 \\
\end{bmatrix} A = \begin{bmatrix}
    x + cQ  & (xy-1)/cQ + y \\
    cQ  & y \\
\end{bmatrix} \in \Gamma_0^{1,1,1}(Q,X).  
\] 
So in fact the number of such matrices $A$ is 
exactly $G_1(c)$ as computed in~\eqref{eq: G1c} in the $\Gamma_0^{1,1,1}(Q,X)$ case.

%%\textbf{Case~II: $x=-X$.} There are only $O(X)$ matrices in $\Gamma^{-1,1,1}(Q,X)$ 
%% with $x=-X$ since there are only at most $X$ choices for $y$.

\textbf{Case~II: $-X<x \leq -cQ$.}  Let 
\[
\widetilde{f}_c(x) = \frac{-cQX + 1}{x}.
\] 
We now need an analogue of Lemma~\ref{Lemma: bijection}. 

\begin{lemma}\label{lem: bijection negative case} Fix $1 \leq c \leq X/Q$, $0 < U \leq X - cQ$.
Then the map \[\mathcal{T}_{\widetilde{f}_c}(cQ, -X, U)  \to \Gamma_0^{-1,1,1}(Q,X)\] given by 
\[
(x,y) \mapsto  A= \begin{bmatrix}
    x  & (xy-1)/cQ \\
    cQ  & y \\
\end{bmatrix}\]
is well defined, injective and its image is exactly the set of those $A\in  \Gamma_0^{-1,1,1,}(Q,X)$ with $-X < x \leq -X + U$ and bottom left entry equal to $cQ$.
\end{lemma}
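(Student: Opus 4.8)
The plan is to transcribe, almost verbatim, the proof of Lemma~\ref{Lemma: bijection}, the only genuine change being that the sign reversal $x < 0$ flips several of the inequalities. First I would unwind the definition: a pair $(x,y) \in \mathcal{T}_{\widetilde{f}_c}(cQ,-X,U)$ satisfies $xy \equiv 1 \pmod{cQ}$, $-X < x \le -X+U$ and $0 < y \le \widetilde{f}_c(x)$. Since $U \le X-cQ$ we have $-X+U \le -cQ$, so in particular $x \le -cQ < 0$. The congruence guarantees $(xy-1)/cQ \in \bZ$, and the determinant computation $\det A = xy - cQ\cdot(xy-1)/cQ = 1$ shows $A \in \SL_2(\Z)$ with bottom-left entry $cQ$ and $-X < a \le -X+U$, as required for the image.

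Next I would check the four defining conditions of $\Gamma_0^{-1,1,1}(Q,X)$. The signs $\sign a = -1$, $\sign c = 1$, $\sign d = 1$ are immediate from $x<0$, $cQ\ge 1$, $y>0$. For the size of $y$: since $x \le -cQ$ and the numerator $-cQX+1$ is negative, writing $\widetilde{f}_c(x) = (cQX-1)/|x|$ and using $|x| \ge cQ$ gives $0 < y \le \widetilde{f}_c(x) \le X - 1/(cQ) < X$. For the entry $b = (xy-1)/cQ$: from $x<0$, $y>0$ we get $xy<0$, hence $b$ is a negative integer and $b \le -1$; while multiplying $y \le \widetilde{f}_c(x)$ through by the negative quantity $x$ reverses the inequality to $xy \ge -cQX+1$, i.e.\ $b \ge -X$. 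Thus $-X \le b \le -1$, so $\|A\|_\infty \le X$ and the map is well defined into $\Gamma_0^{-1,1,1}(Q,X)$.

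Injectivity is immediate, since $(x,y)$ is recovered as the diagonal $(a,d)$ of the image matrix. For surjectivity onto the stated set, I would take any $A \in \Gamma_0^{-1,1,1}(Q,X)$ with bottom-left entry $cQ$ and $-X < a \le -X+U$, set $x=a$, $y=d$, and recover $b=(xy-1)/cQ$ together with $xy \equiv 1 \pmod{cQ}$ from $\det A = 1$. Then $y>0$ from $\sign d = 1$, and the bound $b \ge -X$ coming from $\|A\|_\infty \le X$ rearranges, dividing again by $x<0$, into $y \le \widetilde{f}_c(x)$, so that $(x,y) \in \mathcal{T}_{\widetilde{f}_c}(cQ,-X,U)$.

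The only real subtlety, and the step I would watch most carefully, is the bookkeeping of inequality directions when passing between $b \ge -X$ and $y \le \widetilde{f}_c(x)$: multiplying or dividing by $x$ reverses them, which is precisely where this argument departs from the positive case of Lemma~\ref{Lemma: bijection}, where one had $x > cQ > 0$. Everything else is a direct transcription of that earlier proof.
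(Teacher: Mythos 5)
Your proposal is correct and follows essentially the same route as the paper's own proof: unwind the definition of $\mathcal{T}_{\widetilde{f}_c}(cQ,-X,U)$, use $x \le -cQ < 0$ to bound $y < X$, flip the inequality $y \le \widetilde{f}_c(x)$ upon multiplication by the negative $x$ to get $-X \le (xy-1)/cQ < 0$, and reverse the same manipulations for the converse inclusion. The only differences are cosmetic — you make injectivity and the sign checks explicit, which the paper leaves implicit — so there is nothing of substance to add.
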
  

\begin{proof} Let $(x,y) \in \mathcal{T}_{\widetilde{f}_c}(cQ, -X, U) $. Thus by definition 
\[0<y \leq \frac{-cQX+1}{x}.\]
 As $x<-cQ$ we have that 
 \[\frac{-cQX+1}{x} = \frac{cQX-1}{-x} \leq \frac{cQX-1}{cQ} < X\]
 and so indeed $y \leq X$. Moreover, as $x<0$ we have 
\[
1 \geq \frac{1}{cQ}> \frac{xy - 1}{cQ} \geq -X.
\] 
So indeed this mapping has range inside $\Gamma_0^{-1,1,1,}(Q,X)$. Conversely suppose 
\[A = \begin{bmatrix}
    x  & b \\
    cQ  & y \\
\end{bmatrix}
\] is in $\Gamma_0^{-1,1,1,}(Q,X)$ with $-X <x \leq -X + U$. 
Then $-X \leq b \leq 0$ is an integer thus $xy \equiv 1 \pmod cQ$ and 
\[-X \leq \frac{xy-1}{cQ} \leq 0.\]
Thus as $x<0$ we have 
\[\frac{-cQX + 1}{x} \geq y.  \]
 Thus 
 \[0 < y \leq \widetilde{f}_c(x)\]
 and so indeed $(x,y) \in \mathcal{T}_{\widetilde{f}_c}(cQ, -X, U)$ as desired.\end{proof}

We now fix $c$ with $1 \leq c \leq X/Q$ and observe now that 
by Lemma~\ref{lem: bijection negative case}, for any $Z \in [-X,0)$ and $0 < U \leq |Z|$ we have that $T_{\widetilde{f}_c}(cQ, Z, U)$ has the main term 
\begin{align*} \frac{2}{cQ} \sum_{\substack{\newX < x \le \newX + U\\ \gcd(x,q) = 1}} \widetilde{f}_c(x) &= 
\frac{2}{cQ} \sum_{\substack{\newX < x \le \newX + U\\ \gcd(x,q) = 1}} \frac{-cQX + 1}{x} \\
&=\frac{2}{cQ} \sum_{\substack{-\newX - U \leq x < -\newX \\ \gcd(x',q) = 1}} \frac{cQX -  1}{x} \\ 
&= \frac{2}{cQ} \sum_{\substack{-\newX - U \leq x < -\newX \\ \gcd(x,q) = 1}} f_c(x) \\
&= \frac{2}{cQ} \sum_{\substack{|\newX| - U \leq x < |\newX| \\ \gcd(x,q) = 1}} f_c(x),    \end{align*}
where we recall $f_c(x) = (cQX-1)/x$ as used in Lemma~\ref{Lemma: bijection}. But this is precisely the same main term as for $T_{f_c}(cQ, |\newX| - U, U)$ except for the boundary terms ($x = -Z-U, -Z$) which  contribute only   $O(X)$ (uniformly in $Q$ as $|(cQX-1)/x \leq (cQX-1)/cQ \leq X$). 
Thus, recalling~\eqref{eq: G2 and Tf}, \eqref{eq: Ti Mi} and~\eqref{eq: G2c prelim}, we see that for each admissible $c$, we obtain the  contribution to  $\# \Gamma^{-1,1,1}(Q,X)$,  
which is asymptotic to $G_2(c)$. Now observe that $\widetilde{f}_c(x) = - f_c(x)$ and so $|\widetilde{f}''_c(x)| = |f''_c(-x)|$ which means that the error terms we obtain from applying Lemma~\ref{lem:ModHyp-Curve} to $\widetilde{f}_c$ are the same as those obtained for $f_c$ (we have $x \in [-X, -cQ]$ and before we had $x \in [cQ, X]$). Thus if we sum over $c$ and proceed as before, we see that the error term is at most $O\(\mathbf{E} + X\)$ which implies~\eqref{eq: negative values same count}. 

\subsection{Concluding the proof} 
Substituting~\eqref{eq: negative values same count} in~\eqref{eq: G and Gabc} 
implies
\[
\# \Gamma_0(Q,X) =  8\# \Gamma_0^{1,  1, 1 }(Q,X)   +O(X^{5/3+o(1)} Q^{-1} + X)  .
\]
Recalling~\eqref{eq:  Asymp G111}, we conclude the proof. 

 \section{Proof of Theorem~\ref{thm:Func F}}

\subsection{Approximating $F_1(Q,X)$}
 
 For convenience we let
\[
G(Q,X)  =   \sum_{1 \le n \le X}  \frac{\varphi(Qn)}{Qn}.
\]
So 
\begin{equation}
\label{eq: G and F}
F_1(Q,X) = G(Q,Q^{-1}X).
\end{equation}

We now define the function
\[
h(n) = \mu(n)/n.
\]

\begin{lemma} 
\label{lem:G-expand} 
We have 
\[
G(Q,X) =\frac{\varphi(Q)}{Q} \sum_{\substack{n \leq X\\ \gcd(n,Q) = 1}} h(n) \left \lfloor \frac{X}{n} \right\rfloor.
\]
\end{lemma}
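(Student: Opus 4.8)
The plan is to reduce the sum defining $G(Q,X)$ to a divisor sum by exploiting the multiplicativity of $\varphi(k)/k$, and then to swap the order of summation so that the inner sum over multiples produces the floor function. The starting point is the elementary identity
\[
\frac{\varphi(k)}{k} = \prod_{p \mid k}\(1-\frac1p\) = \sum_{d \mid k} \frac{\mu(d)}{d},
\]
which I would apply with $k = Qn$.

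First I would separate the prime factors of $Qn$ into those dividing $Q$ and those dividing $n$ but coprime to $Q$. Since the set of primes dividing $Qn$ is the union of the primes dividing $Q$ and the primes dividing $n$, this gives the factorisation
\[
\frac{\varphi(Qn)}{Qn} = \prod_{p \mid Q}\(1-\frac1p\) \cdot \prod_{\substack{p \mid n\\ p \nmid Q}}\(1-\frac1p\) = \frac{\varphi(Q)}{Q} \prod_{\substack{p \mid n\\ p \nmid Q}}\(1-\frac1p\).
\]
Expanding the remaining product and using that $\mu$ vanishes on non-squarefree arguments, the second factor becomes a divisor sum restricted to divisors coprime to $Q$, namely
\[
\prod_{\substack{p \mid n\\ p \nmid Q}}\(1-\frac1p\) = \sum_{\substack{d \mid n\\ \gcd(d,Q)=1}} \frac{\mu(d)}{d} = \sum_{\substack{d \mid n\\ \gcd(d,Q)=1}} h(d).
\]

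The final step is to substitute this into the definition of $G(Q,X)$, pull out the factor $\varphi(Q)/Q$, and interchange the order of the double summation over $n \le X$ and $d \mid n$. For each fixed $d$ coprime to $Q$, the integers $n \le X$ divisible by $d$ number exactly $\lfloor X/d\rfloor$, and only $d \le X$ contribute since otherwise no such $n$ exists; relabelling $d$ as $n$ yields the claimed formula. I do not expect any genuine obstacle here: the whole argument is a manipulation of absolutely convergent finite sums, so the only point requiring a little care is the bookkeeping in the interchange of summation, ensuring that the coprimality condition $\gcd(d,Q)=1$ is carried through correctly and that the summation range $n \le X$ arises naturally from the divisibility constraint rather than being imposed by hand.
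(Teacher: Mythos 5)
Your proposal is correct and follows essentially the same route as the paper: both factor $\varphi(Qn)/(Qn)$ as $(\varphi(Q)/Q)\prod_{p\mid n,\ p\nmid Q}(1-p^{-1})$, expand that product as a M\"obius divisor sum over divisors coprime to $Q$, and interchange the order of summation so that the count of multiples $n\le X$ of each $d$ produces the floor $\lfloor X/d\rfloor$. No gaps; this is the paper's argument in slightly different notation.
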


\begin{proof} Observe that for any integer $n \ge 1$, 
\[
\varphi(Qn) =  Qn \prod_{p\mid Qn} \(1-p^{-1}\)
\mand 
\varphi(Q)n =  Qn \prod_{p\mid Q} \(1-p^{-1}\). 
\]
Hence
\[
 \frac{\varphi(Qn)}{\varphi(Q)n}  =  \prod_{\substack{p \mid n\\ \gcd(p,Q) = 1}} \(1 - p^{-1}\).
\]
Thus we derive 
\begin{align*}
 \frac{Q}{\varphi(Q)} G(Q,X) 
&= \sum_{n \leq X} \prod_{\substack{p  \mid n\\ \gcd(p,Q) = 1}} \(1 - p^{-1}\) 
= \sum_{n \leq X} \sum_{\substack{d  \mid n\\ \gcd(d,Q) = 1}} \frac{\mu(d)}{d} \\
&= \sum_{\substack{d \leq X\\ \gcd(d,Q) = 1}} \sum_{\substack{n \leq X\\ d \mid n}} \frac{\mu(d)}{d} = \sum_{\substack{d \leq X\\ \gcd(d,Q) = 1}} \frac{\mu(d)}{d} \left\lfloor \frac{X}{d} \right\rfloor, 
\end{align*}
which completes the proof.\end{proof}

We now  see from Lemma~\ref{lem:G-expand}  that 
\begin{align*} G(Q,X) &= \frac{\varphi(Q)}{Q} X \sum_{\substack{n \leq X\\ \gcd(d,Q) = 1}} \frac{h(n)}{n}  + O\left(\frac{\varphi(Q)}{Q}  \sum_{n \leq X} h(n) \right) \\ 
&= \frac{\varphi(Q)}{Q} X \sum_{\substack{n \leq X\\ \gcd(n,Q) = 1}} \frac{h(n)}{n} +  O\left( \ \frac{\varphi(Q)}{Q} \log X \right) .\end{align*}

Using that 
\[
  \sum_{n>X} \frac{|h(n)|}{n} \le   \sum_{n>X} \frac{1}{n^2}= O\(X^{-1}\),
\]
we write 
\begin{equation}
\label{eq: G(Q,X) and h}
G(Q,X) = \frac{\varphi(Q)}{Q} X \sum_{\substack{n = 1\\ \gcd(n,Q) = 1}}^\infty \frac{h(n)}{n} + O\left(  \frac{\varphi(Q)}{Q}  \log X \right) .
\end{equation}

Note that 
\begin{align*} \sum_{\substack{n \geq 1\\ \gcd(n,Q) = 1}} \frac{h(n)}{n} &= \prod_{\gcd(p,Q) = 1} \left(1 - \frac{1}{p^2}\right) 
= \prod_{p} \left(1 - \frac{1}{p^2}\right)   \prod_{p\mid Q} \left( 1 - \frac{1}{p^2} \right)^{-1}  \\
& = \frac{6}{\pi^2} \prod_{p\mid Q} \left( 1 - \frac{1}{p^2} \right)^{-1} = \frac{6}{\pi^2} \frac{Q}{\varphi(Q)}  \frac{1}{\psi(Q)}.  \end{align*}
Thus, we see from~\eqref{eq: G(Q,X) and h} that 
\[
G(Q,X) =\frac{6}{\pi^2 \psi(Q)} X+ O\left(   \frac{\varphi(Q)}{Q}\log X \right)
\]
and so by~\eqref{eq: G and F} we derive
\begin{equation}
\label{eq: F1} 
F_1(Q,X) =\frac{6}{\pi^2 \psi(Q)Q} X+O\left(  \frac{\varphi(Q)}{Q}  \log X \right).
\end{equation}

 \subsection{Approximating $F_2(Q,X)$}

Let 
\[
\delta_d(n) = \begin{cases} 1, & \text{if}\ d \mid n,\\
0, &  \text{if}\ d \nmid n,
\end{cases}
\]
be the characteristic function of the set of integer multiplies of an integer $d\ne 0$.
Then 
\begin{align*} \sum_{\substack{n \leq X\\ \gcd(n,Q)=1}} \frac{\varphi(n)}{n} &= \sum_{n \leq X} \prod_{p\mid Q} \(1 - \delta_p(n)\) \frac{\varphi(n)}{n} = \sum_{n \leq X} \sum_{d\mid Q} \mu(d)\delta_d(n) \frac{\varphi(n)}{n} \\
&= \sum_{d\mid Q} \mu(d) \sum_{n \leq X/d} \frac{\varphi(dn)}{dn}  
= \sum_{d\mid Q} \mu(d)  G(d, X/d)  . 
 \end{align*}

 We can now use~\eqref{eq: G(Q,X) and h} and the multiplicativity of  $\psi(d)$   to obtain 
 \begin{align*} \sum_{\substack{n \leq X\\ \gcd(n,Q)=1}} \frac{\varphi(n)}{n}  
&= \frac{6}{\pi^2}X \sum_{d\mid Q} \mu(d)\frac{1}{\psi(d)d} + O\(\sum_{d\mid Q} |\mu(d)| \frac{\varphi(d)}{d}  \log X \) \\
&= \frac{6}{\pi^2} X \prod_{p\mid Q} \left(1 - \frac{1}{\psi(p)p}\right) +  O\left( 2^{\omega(Q)} \log X \right) \\
 \end{align*}
 since 
 \[
 \sum_{d\mid Q} |\mu(d)| \frac{\varphi(d)}{d} \le \sum_{d\mid Q} |\mu(d)|  = 2^{\omega(Q)}, 
 \]
 where $\omega(Q)$ is the number of prime divisors of $Q$. 
 
 A simple computation shows that 
\[ \prod_{p\mid Q} \left(1 - \frac{1}{\psi(p)p}\right) =  \prod_{p\mid Q} \left(1 - \frac{1}{p+1}\right) =
\prod_{p\mid Q} \frac{1}{1+p^{-1}} =  \frac{1}{\psi(Q)}.
\]
Therefore 
   \begin{align*} \sum_{\substack{n \leq X\\ \gcd(n,Q)=1}} \frac{\varphi(n)}{n}   = \frac{6}{\pi^2} \frac{X}{\psi(Q)} + 
   O\(2^{\omega(Q)}  \log X \). 
 \end{align*}

Therefore,  using that $2^{\omega(Q)} \le \tau(Q) = Q^{o(1)}$   we obtain  
\begin{equation}
\label{eq: F2} 
\begin{split}
F_2(Q,X) & = Q^{-1} \frac{6}{\pi^2} \frac{X - Q}{\psi(Q)} + O\left(Q^{-1+o(1)} \log X \right)\\
& =    \frac{6}{\pi^2} \frac{X }{Q\psi(Q)} + O\(Q^{-1+o(1)} \log X \) , 
\end{split}
\end{equation}
wince $\psi(Q) \ge Q$.

\subsection{Concluding the proof} 
Combining the bounds~\eqref{eq: F1} and \eqref{eq: F2}  
we obtain the desired result.

  \section{Comments}  

We presented our result,  Corollary~\ref{cor:AsymFormula} as a direct consequence of
Theorems~\ref{thm:Card G}  and~\ref{thm:Func F} of very different nature with error terms of 
different strength. This makes it apparent that Theorem~\ref{thm:Card G} is the bottleneck 
to further improvements of Corollary~\ref{cor:AsymFormula}. 
 
The methods of this work can also be used for  counting elements of bounded norm of other congruence 
subgroup such as 
 \[
\Gamma(Q) = \left\{  \begin{bmatrix}
   a   & b \\
   c  & d \\
\end{bmatrix} \in\SL_2(\Z):~ a,d \equiv 1 \pmod Q, \ b,c \equiv 0 \pmod Q\right\}
\]
and 
 \[
\Gamma_1(Q) = \left\{  \begin{bmatrix}
   a   & b \\
   c  & d \\
\end{bmatrix} \in\SL_2(\Z):~ a,d\equiv 1 \pmod Q, \ c  \equiv 0 \pmod Q\right\}.
\]

One can also adjust our approach to counting matrices of restricted  size with respect to
other natural matrix norms. 
 
%%\end{rem}

%\end{rem}
%%\commI{This should follow from my paper in Glasnik Matem. Vol. 44 (2009), 7-10, 
%%but I suspect that older results e.g. by G. J. Rieger, see [8] in GM, should also be enough, 
%%I need to try to get this paper somehow UNSW didn't let me}

\section*{Acknowledgements}  

The authors would like to thank R{\'e}gis de la Bret{\'e}che for his suggestion 
which has led to the proof of Theorem~\ref{thm:Func F} with the current error term. 

During the preparation of this work, the authors were   supported in part by the  
Australian Research Council Grants DP230100530 and DP230100534.

\end{document}